\newcommand{\N}{\ensuremath{\mathbb{N}}}
\renewcommand{\S}{\ensuremath{\mathbb{S}}}
\newcommand{\Z}{\ensuremath{\mathbb{Z}}}
\newcommand{\R}{\ensuremath{\mathbb{R}}}
\newcommand{\C}{\ensuremath{\mathbb{C}}}
\newcommand{\e}{\textnormal{e}}
\DeclareMathOperator*{\diag}{diag}
\DeclareMathOperator*{\Beta}{Beta}
\DeclareMathOperator*{\per}{per}
\DeclareMathOperator*{\id}{id}
\DeclareMathOperator*{\Real}{Re}
\DeclareMathOperator*{\Imag}{Im}
\newtheorem{thm}{Theorem}[section]
\newtheorem{lemma}[thm]{Lemma}
\newtheorem{remark}[thm]{Remark}
\newtheorem{definition}[thm]{Definition}
\newtheorem{example}[thm]{Example}
\newtheorem{corollary}[thm]{Corollary}
\newtheorem{proposition}[thm]{Proposition}
\numberwithin{equation}{section}
\numberwithin{table}{section}
\numberwithin{figure}{section}
\newcommand{\bend}{\hspace*{0ex} \hfill \hbox{\vrule height
    1.5ex\vbox{\hrule width 1.4ex \vskip 1.4ex\hrule  width 1.4ex}\vrule
    height 1.5ex}}
\long\def\symbolfootnote[#1]#2{\begingroup%
\def\thefootnote{\fnsymbol{footnote}}\footnote[#1]{#2}\endgroup}
\renewcommand{\mathbf}[1]{\ensuremath{\boldsymbol{#1}}}
\renewcommand{\thefootnote}{\fnsymbol{footnote}}
\title{A Randomized Multivariate Matrix Pencil Method for Superresolution Microscopy}
\date{\today}
\date{}
\author[M.~Ehler]{Martin Ehler}
\address[M.~Ehler]{University of Vienna,
Department of Mathematics,
Oskar-Morgenstern-Platz 1, 
A-1090 Vienna
}
\email{martin.ehler@univie.ac.at}
\author[S.~Kunis]{Stefan Kunis}
\address[S.~Kunis]{Osnabr\"uck University,
Institute for Mathematics, 
Albrechtstr.~28a, 
D-49076 Osnabr\"uck
}
\email{stefan.kunis@uni-osnabrueck.de}
\author[T.~Peter]{Thomas Peter}
\address[T.~Peter]{University of Vienna,
Department of Mathematics,
Oskar-Morgenstern-Platz 1, 
A-1090 Vienna
}
\email{thomas.peter@univie.ac.at}
\author[C.~Richter]{Christian Richter}
\address[C.~Richter]{Osnabr\"uck University,
Institute for Biology,
Barbarastr. 11, 
D-49076 Osnabr\"uck
}
\email{christian.richter@biologie.uni-osnabrueck.de}
\begin{document}

\begin{abstract}
The matrix pencil method is an eigenvalue based approach for the parameter identification of sparse exponential sums. 
We derive a reconstruction algorithm for multivariate exponential sums that is based on simultaneous diagonalization.
Randomization is used and quantified to reduce the simultaneous diagonalization to the eigendecomposition of a single random matrix. To verify feasibility, the algorithm is applied to synthetic and experimental fluorescence microscopy data. 
\end{abstract}

\keywords{frequency analysis,
spectral analysis,
exponential sum,
moment problem,
super-resolution}

\subjclass[2010]{65T40, 
42C15, 
30E05, 
65F30
}
\maketitle

\section{Introduction}
\label{sect:Einleitung}
Many imaging and data analysis problems in the applied sciences lead to the numerical task of parameter identification in exponential sums $\sum_{j=1}^M c_j \e^{-2\pi i\langle t_j,\cdot\rangle}$. For sparse exponential sums, i.e., for small $M$, Prony's method enables the identification of its parameters $\{t_j\}_{j=1}^M\subset \R^d$ and contributions $\{c_j\}_{j=1}^M\subset\C$ from relatively few sampling values, see e.g.~\cite{Potts:2010ko,Potts:2013vb} and references therein.

The most feasible implementations for $d=1$ are based on the eigenvalue analysis of the associated Prony matrix, see e.g.~\cite{Beinert:2017gy,PP13}. The principles of the multivariate setting have been examined in \cite{KPRO16,Kunis:by,AnCa17,Mo18}, for instance, but associated numerical schemes have not been extensively studied yet. 

The works \cite{ Sa18, DiIs17, PoTa13} describe multivariate Prony methods that are based on finding zeros of several univariate respectively multivariate polynomials.
We shall completely circumvent this algebraic geometry problem by developing a numerical scheme based on a randomized multivariate matrix pencil method. We construct matrices $S_1,\ldots,S_d$ from the sampling values, so that their simultaneous diagonalization yields the parameters $\{t_j\}_{j=1}^M$. Since $S_1,\ldots,S_d$ are not normal, standard numerical algorithms for simultaneous diagonalization are not available, cf.~\cite{Bunse-Gerstner:1993jy,Cardoso:1996ck,Golub:1996fk,Kressner:2005sp}. To circumvent this problem, we derive the joint eigenbasis from the eigendecomposition of a single matrix that is a random linear combination of $S_1,\ldots,S_d$. While \cite{AnCa17} diagonalizes $S_1$ and hopes for simple eigenvalues, the recent papers \cite[Alg.~3.1]{Mo18} and the algorithm introduced in \cite{SaUsCo17} also use the above random linear combination and argue that generically the eigenvalues are simple. While in \cite{SaUsCo17} the authors focus on analyzing the influence of pertubations on their multivariate ESPRIT-method, here in the new multivariate matrix pencil method, we describe the situation of using a random linear combination of $S_1, \dots, S_d$ in more detail and quantify the influence of the minimal separation of $\{t_j\}_{j=1}^M$ on the eigendecomposition of the random matrix. 

To check on its feasibility, our methodology is applied to analyze fluorescence microscopy images. We cast the problem of locating protein markers as a parameter identification in exponential sums. Due to its analytic roots, Prony's method enables the identification of locations at the subpixel scale, sometimes referred to as superresolution fluorescence microscopy, cf.~\cite{Studer:2012oq}. The results on experimental fluorescence images show that our scheme is numerically feasible. 

The outline is as follows: In Section \ref{sect:pre} we develop our numerical scheme. The approach of simultaneous diagonalization to identify $\{t_j\}_{j=1}^M$ is presented in Section \ref{sec:sim}. The problem of simultaneous diagonalization is reduced to the diagonalization of a single random matrix in Section \ref{sec:single}, where we examine the influence of the minimal separation of the parameters $\{t_j\}_{j=1}^M$. Our new scheme is applied to  synthetic and to experimental fluorescence microscopy data in Section \ref{sec:appl}.

\section{Reconstruction of sparse exponential sums from samples}\label{sect:pre}
Let $\{t_j\}_{j=1}^M\subset [0,1)^d$ always denote $M$ pairwise different $d$-dimensional parameters and consider the exponential sum
\begin{equation}\label{eq:fund prob samp}
f(k) = \sum_{j=1}^M c_j \e^{-2\pi \mathrm i\langle t_j,k\rangle},\quad k\in\Z^d,
\end{equation}
with nonzero coefficients $\{c_j\}_{j=1}^M\subset\C\backslash \{0\}$. Our aim is to identify the parameters $\{t_j\}_{j=1}^M$ and coefficients $\{c_j\}_{j=1}^M$ from sampling values $\{f(k)\}_{k\in I}$ with suitable $I\subset \Z^d$.

\subsection{Reconstruction by simultaneous diagonalization}\label{sec:sim}
For $n\in\N$, let $I_n:=\{0,\dots,n\}^d$ and select a fixed ordering of the elements in $I_n$. Knowledge of the sampling values of $f$ on the set difference $I:=I_{n+1}-I_n$ 
enables us to build the matrices
\begin{equation*}
T:= \left(f(k-l)\right)_{k,l\in I_n},\qquad T_{\ell}:=(f(k-l+e_{\ell}))_{k,l\in I_n}, \quad\ell=1,\ldots,d.
\end{equation*}
If $T$ has rank $M$, then we compute the reduced singular value decomposition 
\begin{equation*}
T=U \Sigma V^*,
\end{equation*}
where $\Sigma\in\R^{M\times M}$ is positive definite and $U\in\C^{N\times M}$, $V\in\C^{M\times N}$ satisfy $U^*U=V^*V=\id\in\R^{M\times M}$ with $N:=\# I_n =(n+1)^d$. Therefore, we can define the set of $M\times M$ matrices
\begin{equation}\label{eq:S def}
S_{\ell}:=U^* T_{\ell} V \Sigma^{-1},\quad \ell=1,\ldots,d.
\end{equation}
These matrices turn out to be simultaneous diagonalizable, cf.~Theorem \ref{th:22}, which shall enable us to identify the vectors $\{t_j\}_{j=1}^M$. 

In the following theorem, $K_d$ denotes an absolute constant that only depends on $d$ and is further specified in \cite{Kunis:by,KPRO16}. We also make use of 
\begin{equation*}
z_j:=\e^{-2\pi i t_j}:=(\e^{-2\pi i t_{j,1}},\ldots,\e^{-2\pi i t_{j,d}}), \quad j=1,\ldots,M,
\end{equation*} 
so that it is sufficient to reconstruct $\{z_j\}_{j=1}^M$ in order to identify $\{t_j\}_{j=1}^M$.
\begin{thm}\label{th:22}
If $n\geq  \frac{K_d}{\min_{i\neq j} \|z_i - z_j\|}$, then $T$ has rank $M$ and $S_1,\ldots,S_d$ are simultaneously diagonalizable. Furthermore, any regular matrix $W$ that simultaneously diagonalizes $S_1,\ldots,S_d$ yields a permutation $\tau$ on $\{1,\ldots,M\}$ such that 
\begin{equation*}
W^{-1} S_\ell W = \diag(\langle z_{\tau(1)},e_\ell\rangle,\ldots,\langle z_{\tau(M)},e_\ell\rangle),\quad \ell=1,\ldots,d.
\end{equation*}
\end{thm}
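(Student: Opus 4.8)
The plan is to exhibit an explicit matrix that simultaneously diagonalizes all the $S_\ell$ and to read off the diagonal entries, and then argue uniqueness of such a diagonalization up to permutation. First I would recall the Vandermonde-type factorization underlying the construction: with the chosen ordering of $I_n$, set $A := \left(z_j^{k}\right)_{k\in I_n,\,j=1,\dots,M} \in \C^{N\times M}$ (monomials $z_j^k = \prod_{m=1}^d z_{j,m}^{k_m}$) and $C := \diag(c_1,\dots,c_M)$. A direct computation with \eqref{eq:fund prob samp} gives $T = A\, C\, \overline{A}^{\,\top}$ after absorbing the shift $I_{n+1}-I_n$ appropriately, and more importantly $T_\ell = A\, D_\ell\, C\, \overline{A}^{\,\top}$ where $D_\ell := \diag(\langle z_1,e_\ell\rangle,\dots,\langle z_M,e_\ell\rangle)$ is the diagonal matrix of $\ell$-th coordinates of the nodes. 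The hypothesis $n \geq K_d / \min_{i\neq j}\|z_i-z_j\|$ is exactly the separation condition from \cite{Kunis:by,KPRO16} guaranteeing that $A$ has full column rank $M$; hence $\rank T = M$, which is the first assertion.

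Next I would connect the SVD $T = U\Sigma V^*$ to this factorization. Since $A$ has full column rank and $T = A C \overline{A}^\top$ with $C$ invertible, the columns of $U$ span the column space of $A$, so there is an invertible $M\times M$ matrix $F$ with $A = U F$. Substituting into the formulas for $T$ and $T_\ell$, one obtains $\Sigma = F C \overline{A}^\top V$ (from $U\Sigma V^* = U F C \overline{A}^\top$, using $U^*U=\id$ and that $V^*V = \id$ lets us solve on the right) and $T_\ell = U F D_\ell C \overline{A}^\top V (V^*V)$, so that
\[
S_\ell = U^* T_\ell V \Sigma^{-1} = F D_\ell C \overline{A}^\top V \, \Sigma^{-1} = F D_\ell C \overline{A}^\top V \left(F C \overline{A}^\top V\right)^{-1} = F D_\ell F^{-1}.
\]
Thus $W_0 := F$ simultaneously diagonalizes every $S_\ell$, with $W_0^{-1} S_\ell W_0 = D_\ell = \diag(\langle z_1,e_\ell\rangle,\dots,\langle z_M,e_\ell\rangle)$; this proves simultaneous diagonalizability and also identifies the eigenvalues.

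It remains to handle an \emph{arbitrary} regular $W$ that simultaneously diagonalizes $S_1,\dots,S_d$ and to produce the permutation $\tau$. Set $P := W_0^{-1} W$; then $P^{-1} D_\ell P$ is diagonal for every $\ell$, i.e. $P$ conjugates each $D_\ell$ to a diagonal matrix. The key point, and the step I expect to require the most care, is that the joint spectrum is \emph{simple}: the vectors $(\langle z_1,e_1\rangle,\dots,\langle z_1,e_d\rangle),\dots,(\langle z_M,e_1\rangle,\dots,\langle z_M,e_d\rangle)$ are pairwise distinct, which holds because the $z_j = \e^{-2\pi i t_j}$ are pairwise distinct (as the $t_j \in [0,1)^d$ are). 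A standard linear-algebra fact then says that a matrix $P$ simultaneously diagonalizing a family of diagonal matrices with jointly simple spectrum must be of the form (permutation)$\times$(nonsingular diagonal): column $k$ of $P$ is a common eigenvector of all the $D_\ell$, hence supported on a single coordinate $\tau(k)$, and regularity of $P$ forces $\tau$ to be a permutation. Writing $P = Q \Lambda$ with $Q$ the permutation matrix of $\tau$ and $\Lambda$ diagonal, we get $W^{-1} S_\ell W = P^{-1} D_\ell P = \Lambda^{-1} Q^{-1} D_\ell Q \Lambda = Q^{-1} D_\ell Q = \diag(\langle z_{\tau(1)},e_\ell\rangle,\dots,\langle z_{\tau(M)},e_\ell\rangle)$, as claimed. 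The only genuine subtlety is the appeal to \cite{Kunis:by,KPRO16} for the rank bound; everything after that is linear algebra, with the simplicity of the joint spectrum being the conceptual crux that makes the permutation well defined.
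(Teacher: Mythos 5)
Your proof is correct and follows essentially the same strategy as the paper: derive the Vandermonde factorizations of $T$ and the $T_\ell$, invoke the rank bound from \cite{Kunis:by,KPRO16} to get $\rank T = M$, exhibit an explicit simultaneous diagonalizer from the reduced SVD, and argue uniqueness up to permutation via simplicity of the spectrum. The only real divergence is in the last step: the paper selects a $\tilde\mu$ so that the single random combination $C_{\tilde\mu}$ has simple eigenvalues and observes that every simultaneous diagonalizer of the $S_\ell$ also diagonalizes $C_{\tilde\mu}$ (a framing that also sets up Corollary~\ref{th:single}), whereas you argue directly that the tuple $(D_1,\dots,D_d)$ has jointly simple spectrum because the $z_j$ are pairwise distinct, so the change of basis $P=W_0^{-1}W$ must be permutation-times-diagonal --- both are sound and rest on the same underlying linear-algebra fact, and your $W_0=F$ (from $A=UF$) differs from the paper's $W_0=(AU)^*$ only in bookkeeping.
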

\begin{proof}
According to \cite{KPRO16}, $T$ always admits the factorization
\begin{equation}\label{eq:factorT}
  T= A^* D A,
\end{equation}
where $A$ is the $M\times N$ multivariate complex Vandermonde matrix
\begin{equation*}
  A=\big(z_j^k\big)_{\substack{j=1,\dots,M\\k\in I_n}},  
\end{equation*}
and $D=\diag(c_1,\ldots,c_M)$. The condition on $n$ implies that $A$ has full rank $M$, cf.~\cite{Kunis:by,KPRO16}.  Hence, $T$ has indeed rank $M$ since all $c_1,\ldots,c_M$ are nonzero.

We also deduce the factorization
\begin{equation*}
T_\ell = A^* D_\ell A,\quad \ell=1,\ldots,d,
\end{equation*}
where the diagonal matrix $D_\ell$ is given by
\begin{equation*}
D_\ell:=\diag(c_1\langle z_1,e_\ell\rangle,\ldots,c_M\langle z_M,e_\ell\rangle), \quad \ell=1,\ldots,d.
\end{equation*}
We shall now check that the specific matrix $W_0:=(AU)^*$ (which is not accessible to us) simultaneously diagonalizes $S_1,\ldots,S_d$. Indeed, by inserting the definitions, we obtain
\begin{equation*}
W_0^{-1} S_\ell W_0   = (AU)^{-*} U^* A^* D_\ell A  V  \Sigma^{-1} (AU)^*.
\end{equation*}
Note that the reduced singular value decomposition implies that both matrices, $AU$ and $AV$, are regular.
Since $\Sigma = U^*TV = U^*A^*DAV$, we deduce $\Sigma^{-1}=(AV)^{-1} D^{-1}(AU)^{-*}$, which implies 
\begin{equation*}
W_0^{-1} S_\ell W_0   =D_\ell D^{-1} = \diag(\langle z_1,e_\ell\rangle,\ldots,\langle z_{M},e_\ell\rangle),\quad \ell=1,\ldots,d,
\end{equation*}
so that $W_0$ simultaneously diagonalizes $S_1,\ldots,S_d$. Note that $W_0$ also diagonalizes any complex linear combination
\begin{equation}\label{eq:Cmu}
C_\mu:=\sum_{\ell=1}^d \overline{\mu}_\ell S_\ell,\quad \mu\in\C^d.
\end{equation}
Because of
\begin{equation*}
	W_0^{-1} C_\mu W_0 = \diag\left(\sum_{\ell = 1}^d \bar \mu_\ell\langle z_1,e_\ell \rangle, \ldots,\sum_{\ell = 1}^d \bar \mu_\ell\langle z_1,e_\ell \rangle \right),
\end{equation*}
the eigenvalues $\lambda_1(\mu),\ldots,\lambda_M(\mu)$ of $C_\mu$ are 
\begin{equation*}
\lambda_j(\mu)=\langle z_j,\mu\rangle
\end{equation*}
with the ordering induced by $W_0$. 
Since $\{t_j\}_{j=1}^M$ are pairwise different, so are $\{z_j\}_{j=1}^M$, and, hence, there is $\tilde{\mu}\in \S_\C^{d-1}=\{x\in\mathbb{C}^d:\|x\|=1\}$ such that $\langle z_i - z_j, \tilde \mu \rangle \neq 0$ for all $i\neq j$ and thus $\{\lambda_j(\tilde{\mu})\}_{j=1}^M$ are pairwise different. In other words, all eigenspaces of $C_{\tilde{\mu}}$ are $1$-dimensional. 

Any matrix $W=(w_1,\ldots,w_M)$ that simultaneously diagonalizes $S_1,\ldots,S_d$ also diagonalizes $C_{\tilde{\mu}}$. 
Thus, there is a permutation $\tau$ such that $w_{\tau(i)}$ spans the same space as the $i$-th column of $W_0$, which concludes the proof.
\end{proof}
According to Theorem \ref{th:22}, the diagonalization of $S_\ell$ encodes the $\ell$-th entry of a permutation of the  vectors $\{z_j\}_{j=1}^M$. We require simultaneous diagonalization to ensure that these entries are associated to the same permutation across all $\ell=1,\ldots,d$.

In general, the matrices $S_1,\ldots,S_d$ are not normal. Therefore, the numerical task of simultaneous diagonalization is difficult and many simultaneous diagonalization algorithms in the literature are not suitable, cf.~\cite{Bunse-Gerstner:1993jy,Cardoso:1996ck,Golub:1996fk,Kressner:2005sp}. We attempt to circumvent such issues by using $C_\mu$ from \eqref{eq:Cmu}, which shall enable us to restrict our diagonalization efforts to a single matrix:
\begin{corollary}\label{th:single}
If $\mu\in\C^d$ is such that $\lambda_1(\mu),\ldots,\lambda_M(\mu)$ are pairwise different, then any matrix $W$ that diagonalizes $C_\mu$ also simultaneously diagonalizes $S_1,\ldots,S_d$. 
\end{corollary}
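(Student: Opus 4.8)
The plan is to exploit the eigenvalue bookkeeping already set up in the proof of Theorem \ref{th:22}. Recall that the hidden matrix $W_0=(AU)^*$ simultaneously diagonalizes $S_1,\dots,S_d$ and hence diagonalizes $C_\mu=\sum_{\ell=1}^d\overline\mu_\ell S_\ell$ with $W_0^{-1}C_\mu W_0=\diag(\lambda_1(\mu),\dots,\lambda_M(\mu))$, where $\lambda_j(\mu)=\langle z_j,\mu\rangle$. So the columns of $W_0$ form an eigenbasis of $C_\mu$ with these prescribed eigenvalues.

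First I would invoke the standard linear-algebra fact that when the eigenvalues $\lambda_1(\mu),\dots,\lambda_M(\mu)$ are pairwise distinct, each eigenspace of $C_\mu$ is one-dimensional, and therefore any matrix $W$ diagonalizing $C_\mu$ has columns that are, up to a permutation $\tau$ and nonzero scalar factors, the columns of $W_0$; that is, $W=W_0 P\Lambda$ for a permutation matrix $P$ (realizing $\tau$) and an invertible diagonal matrix $\Lambda$. Second, I would simply substitute this into $W^{-1}S_\ell W$. Since $W_0^{-1}S_\ell W_0$ is diagonal by Theorem \ref{th:22}, and conjugation by the diagonal matrix $\Lambda$ and by the permutation matrix $P$ both send diagonal matrices to diagonal matrices, we get that $W^{-1}S_\ell W=(P\Lambda)^{-1}(W_0^{-1}S_\ell W_0)(P\Lambda)$ is diagonal for every $\ell=1,\dots,d$. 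Because the same $W$ is used for all $\ell$, this is exactly simultaneous diagonalization of $S_1,\dots,S_d$.

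There is essentially no hard step here: the only thing to be a little careful about is that distinct eigenvalues of $C_\mu$ force the eigenvectors to be the columns of $W_0$ only up to scaling and reordering — one does not get $W=W_0$ on the nose — but that ambiguity is harmless since it is absorbed by the diagonal-preserving conjugations above. I would also note in passing that the existence of such a $\mu$ is guaranteed (as in the proof of Theorem \ref{th:22}, any $\mu=\tilde\mu\in\S_\C^{d-1}$ avoiding the finitely many hyperplanes $\langle z_i-z_j,\cdot\rangle=0$ works), so the corollary is not vacuous; quantifying how large a random such $\mu$ makes the eigenvalue gaps is precisely the subject of the next section.
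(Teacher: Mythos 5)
Your proof is correct and takes essentially the same route as the paper: the paper's proof of the corollary just says ``the same arguments as in the proof of Theorem \ref{th:22} imply the assertion,'' referring precisely to the observation that distinct eigenvalues of $C_\mu$ force one-dimensional eigenspaces, so any diagonalizing $W$ agrees with $W_0$ up to column permutation and scaling, and such a change of basis preserves simultaneous diagonality. You have merely spelled out the factorization $W=W_0 P\Lambda$ and the resulting conjugation explicitly, which is a useful clarification but not a different argument.
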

\begin{proof}
The matrices $C_\mu, S_1,\ldots,S_d$ are simultaneously diagonalizable. The same arguments as in the proof of Theorem \ref{th:22} imply the assertion. 
\end{proof}
According to Corollary \ref{th:single} we aim to find $\mu\in\C^d$ such that $\lambda_1(\mu),\ldots,\lambda_M(\mu)$ are pairwise different. 
For a nonzero vector $z\in\C^d$, let $z^\perp$ denote the $d-1$-dimensional linear subspace of $\C^d$ orthogonal to $z$. 
The proof of Theorem \ref{th:22} reveals that
\begin{equation}\label{eq:set}
\big\{\mu \in\C^d : \lambda_1(\mu),\ldots,\lambda_M(\mu) \text{ are pairwise different} \big\} = \C^d\setminus \bigcup_{i\neq j}(z_i-z_j)^\perp
\end{equation}
Hence, this set is the entire $\C^d$ except for at most $\binom{M}{2}$ many $(d-1)$-dimensional subspaces.

\begin{example}
Let $d=2$, $M=5$, and choose $t_1, \dots, t_5 \in [0,1)^2$ randomly. We construct $S_1, S_2 \in\mathbb C^{5\times 5}$ by \eqref{eq:S def}. Thus, we choose $\mu = (\mu_1, \mu_2)^\top \in\mathbb{S}_\C^1$ and construct $C_\mu = \mu_1S_1 + \mu_2 S_2$. According to \eqref{eq:set} we expect $\binom{5}{2} = 10$ great circles on $\mathbb S_\C^1$, with the property that choosing a $\mu$ from one of those great circles results in a $C_\mu$, that has at least one eigenspace of dimension larger than one. 
For $\xi \in\mathbb C$, with $\|\xi\|=1$ we get $C_{\mu \xi} = \xi\left(\mu_1S_1 + \mu_2 S_2\right)$. This shows that the multiplication of $C_\mu$ by a global phase $\xi$ does not change the pairwise differences of the eigenvalues of $C_\mu$ and therefore we can use Hopf fibration, to identify great circles on $\mathbb S_\C^1$ with a single point on $\mathbb S^2$, for visualization. Indeed we can observe that the minimal distance of any two eigenvalues of $C_\mu$ is nonzero on $\mathbb{S}^2$ except for $10$ points, see Figure \ref{fig:MuAbhaengigkeit}(a). Note, that we only see $8$ of those $10$ points in \ref{fig:MuAbhaengigkeit}(a), the other $2$ are on the back side of the sphere.

%
For visual illustration of the expected great circles, we now switch to the real case and choose $d=3$, $M=5$, 
and restrict $\mu$ to the real sphere $\mathbb S^2$. In  Figure \ref{fig:MuAbhaengigkeit}(b) we see $10$ great circles on $\mathbb S^2$, for which $C_\mu$ has eigenspaces of dimension larger than one. Observe that away from those great circles, the minimal distance of any two eigenvalues of $C_\mu$ rapidly increases. 
\begin{figure}
		\subfigure[$S_1, S_2 \in\mathbb C^{5 \times 5}$, $\mu \in \mathbb S_\C^1$ ]{
	  \includegraphics[width=0.45\textwidth]{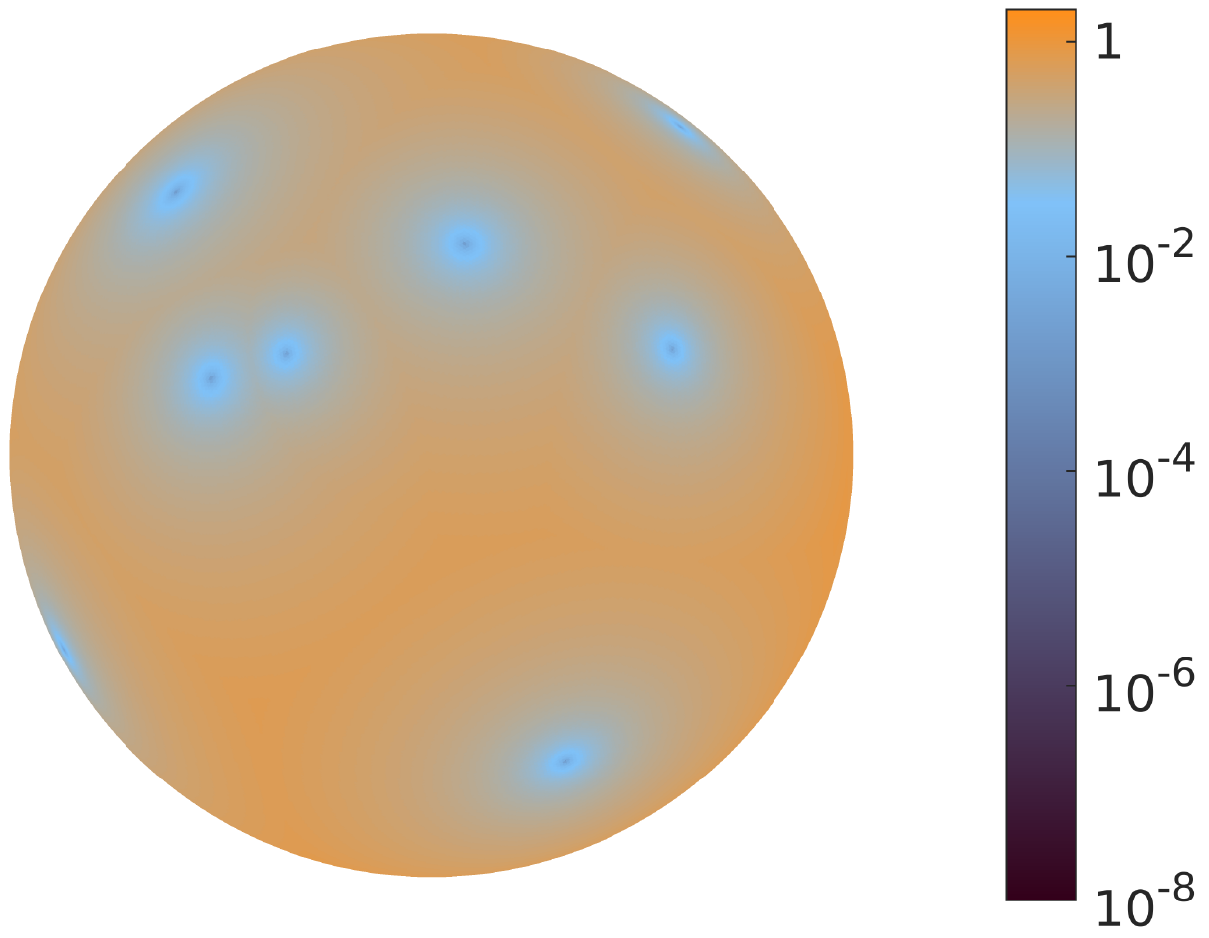}
	}
	\subfigure[$d=3$, $M=5$, and $\mu \in \mathbb S^2$ ]{
	  \includegraphics[width=0.45\textwidth]{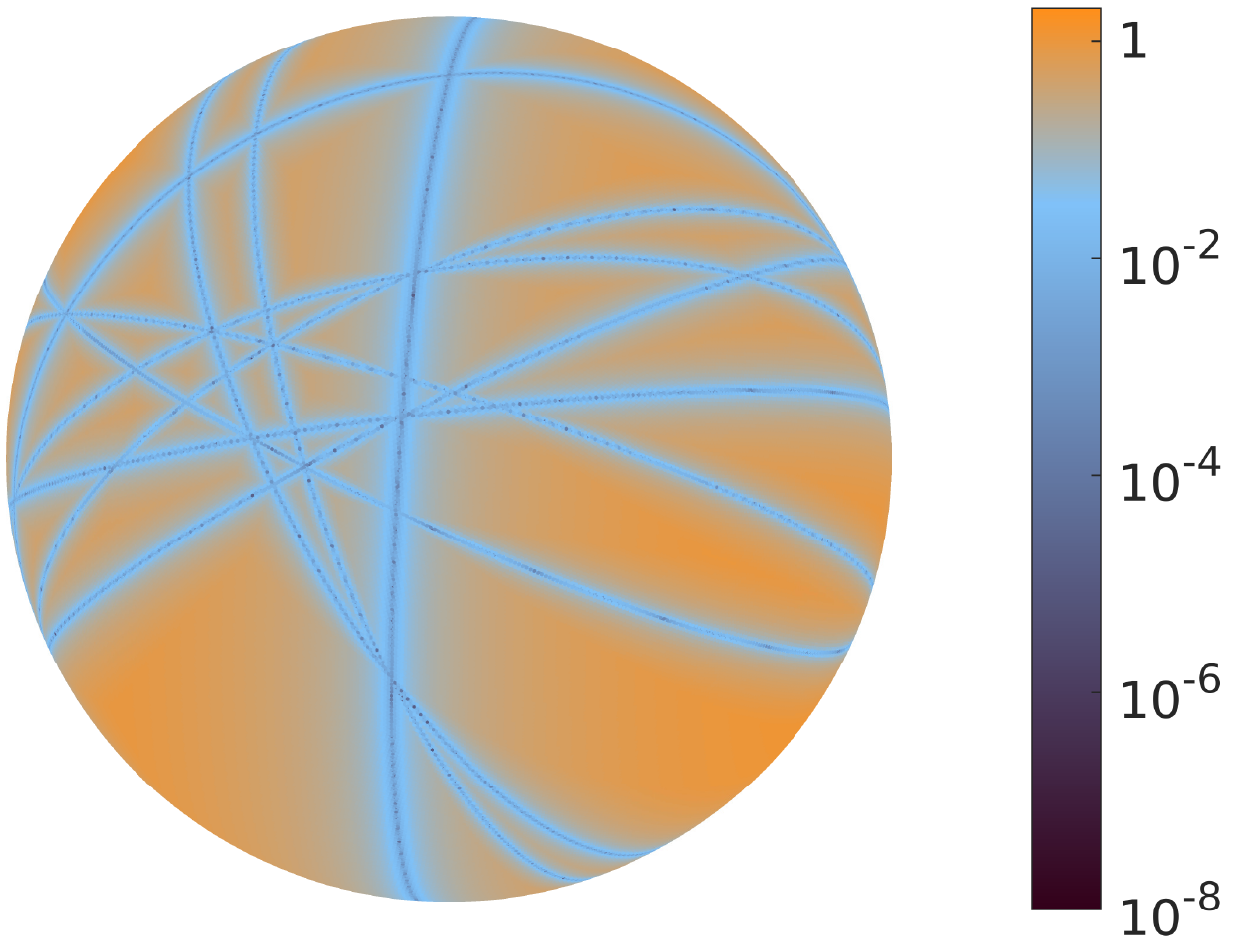}
	}
	\caption{Visualization of the smallest distance of any two eigenvalues of $C_\mu$.}
	\label{fig:MuAbhaengigkeit}
	\end{figure}
\end{example}

\begin{remark}
Our approach to simultaneous diagonalization of $S_1,\ldots,S_d$ suggested in Corollary \ref{th:single} requires our present setting, in which $\{z_j\}_{j=1}^M$ are pairwise different. It does not apply to the problem of simultaneous diagonalization in general. 
\end{remark}

\subsection{Simultaneous diagonalization by random linear combinations}\label{sec:single}
The present section is dedicated to quantify the difference $\lambda_i(\mu)-\lambda_j(\mu)$ in relation to the difference $z_i-z_j$. If $\mu\in\S_\C^{d-1}$ is a random vector, distributed according to the unitarily invariant probability measure on $\S_\C^{d-1}$, then 
\begin{equation*}
\mathbb{E}|\lambda_i(\mu)-\lambda_j(\mu)| = \frac{1}{\sqrt{d}}\|z_i-z_j\|.
\end{equation*}
The following result provides a more quantitative analysis:
\begin{thm}\label{th:stab oder so}
Let $i\neq j$ be fixed and suppose $\epsilon\in[0,1]$. If $\mu\in\S_\C^{d-1}$ is a random vector, distributed according to the unitarily invariant probability measure on $\S_\C^{d-1}$, then the probability that 
\begin{equation}\label{eq:mu satifsies}
|\lambda_i(\mu)-\lambda_j(\mu)| < \epsilon \|z_i-z_j\|
\end{equation}
holds is at most $2\sqrt{\frac{d}{\pi}}\epsilon$.
\end{thm}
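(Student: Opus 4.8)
The plan is to reduce the statement to a one–dimensional computation, evaluate the resulting probability exactly, and then control it by an elementary two–regime estimate. \emph{For the reduction}, recall from the computation in the proof of Theorem~\ref{th:22} that $\lambda_k(\mu)=\langle z_k,\mu\rangle$ for every $k$, so with $w:=z_i-z_j\neq 0$ the event in \eqref{eq:mu satifsies} is $|\langle w,\mu\rangle|<\epsilon\|w\|$. Since the law of $\mu$ is invariant under unitary maps of $\C^d$, I would apply a unitary matrix sending $w/\|w\|$ to the first standard basis vector: this leaves the distribution of $\mu$ unchanged and turns the event into $|\mu_1|<\epsilon$, where $\mu_1$ denotes the first coordinate of $\mu$. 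Hence it suffices to prove $\mathbb{P}(|\mu_1|<\epsilon)\le 2\sqrt{d/\pi}\,\epsilon$.

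\emph{Next I would compute this probability exactly.} Writing $\mu=g/\|g\|$ with $g\in\C^d$ having i.i.d.\ standard complex Gaussian entries gives $|\mu_1|^2=|g_1|^2\big/\sum_{k=1}^d|g_k|^2$ with i.i.d.\ exponentially distributed $|g_k|^2$, so $|\mu_1|^2$ is $\Beta(1,d-1)$–distributed, i.e.\ it has density $(d-1)(1-x)^{d-2}$ on $[0,1]$ (equivalently one may integrate the surface measure of the corresponding spherical cap directly). Therefore
\[
\mathbb{P}(|\mu_1|<\epsilon)=\mathbb{P}(|\mu_1|^2<\epsilon^2)=1-(1-\epsilon^2)^{d-1},\qquad\epsilon\in[0,1].
\]

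\emph{It then remains to verify the elementary inequality} $1-(1-\epsilon^2)^{d-1}\le 2\sqrt{d/\pi}\,\epsilon$ for all $\epsilon\in[0,1]$ and all $d\in\N$. I would split at the threshold $\epsilon_0:=\tfrac12\sqrt{\pi/d}$, which satisfies $\epsilon_0<1$. For $\epsilon\le\epsilon_0$, Bernoulli's inequality gives $(1-\epsilon^2)^{d-1}\ge 1-(d-1)\epsilon^2$, whence
\[
1-(1-\epsilon^2)^{d-1}\le (d-1)\epsilon^2\le (d-1)\epsilon_0\,\epsilon=\frac{\pi(d-1)}{4d}\cdot 2\sqrt{d/\pi}\,\epsilon\le 2\sqrt{d/\pi}\,\epsilon,
\]
the last step being just $\pi(d-1)\le 4d$. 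For $\epsilon\ge\epsilon_0$ I would bound the left–hand side by $1$ and use $\epsilon_0^{-1}=2\sqrt{d/\pi}$ to get $1\le\epsilon_0^{-1}\epsilon=2\sqrt{d/\pi}\,\epsilon$. Combining the two ranges finishes the proof.

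I expect no step to be genuinely hard: the reduction is bookkeeping plus unitary invariance, and the exact value of $\mathbb{P}(|\mu_1|<\epsilon)$ is a standard fact about the uniform measure on the complex sphere. The only point requiring a little care is the last step: the bound $2\sqrt{d/\pi}\,\epsilon$ is not sharp — the true supremum of $\bigl(1-(1-\epsilon^2)^{d-1}\bigr)/\epsilon$ grows like $\sqrt d$ but with a constant smaller than $2/\sqrt\pi$ — so one must pick a threshold $\epsilon_0$ for which both regimes close up at once, and $\epsilon_0=\tfrac12\sqrt{\pi/d}$ works precisely because $\pi(d-1)\le 4d$.
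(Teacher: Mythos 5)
Your proof is correct, and it takes a genuinely different and in fact cleaner route than the paper's. After the common reduction to the event $\{|\langle w,\mu\rangle|<\epsilon\|w\|\}$ with $w=z_i-z_j$, the paper does \emph{not} compute the exact probability: it uses the estimate $|\Real\langle y,\mu\rangle|\le|\langle y,\mu\rangle|$ to replace the complex modulus by the first real coordinate after identifying $\S_\C^{d-1}$ with $\S^{2d-1}$, so its bound is the measure of a spherical band on $\S^{2d-1}$, written as the Beta CDF $\mathcal{I}_{[\epsilon^2]}(\tfrac12,d-\tfrac12)$, which is then estimated by a derivative comparison and the inequality $1/\Beta(\tfrac12,d-\tfrac12)\le\sqrt{d/\pi}$. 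You instead keep the complex modulus, use unitary invariance to reduce to the first complex coordinate, observe that $|\mu_1|^2\sim\Beta(1,d-1)$ (equivalently, the complex spherical cap measure), and thereby obtain the \emph{exact} probability $1-(1-\epsilon^2)^{d-1}$, which you then dominate by $2\sqrt{d/\pi}\,\epsilon$ via a two-regime argument at the threshold $\epsilon_0=\tfrac12\sqrt{\pi/d}$. Your version avoids the real-part relaxation, so it bounds a strictly smaller quantity (your exact value is $\le$ the paper's upper bound for all $d\ge2$), and it replaces the Beta-CDF calculus by an elementary Bernoulli estimate plus a trivial bound; the arithmetic in both regimes checks out, including the key inequality $\pi(d-1)\le4d$ and the observation $\epsilon_0<1$. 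The paper's phrasing in terms of $\mathcal{I}_{[\epsilon^2]}(\tfrac12,d-\tfrac12)$ buys a direct connection to existing formulas for spherical-cap areas (the cited reference), but for the stated bound your approach is shorter and tighter at every intermediate step.
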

Theorem \ref{th:stab oder so} immediately implies that the probability that any of the inequalities 
\begin{equation}\label{eq:mu satifsies}
|\lambda_i(\mu)-\lambda_j(\mu)|\geq \epsilon \|z_i-z_j\|, \quad \forall i\neq j,
\end{equation}
is violated is at most $\binom{M}{2} 2\sqrt{\frac{d}{\pi}}\epsilon$. 
In other words, if we select about $M^2$ many independent $\mu$, then the probability that \eqref{eq:mu satifsies} fails is at most of the order $\epsilon$. 
\begin{proof}[Proof of Theorem \ref{th:stab oder so}]
The complex sphere $\S_\C^{d-1}$ admits the standard identification with the real sphere $\S^{2d-1}$ by $x\mapsto \Big(\begin{smallmatrix}\Real(x)\\ \Imag(x)
\end{smallmatrix}\Big)
$, 
and $\Big(\begin{smallmatrix}\Real(\mu)\\ \Imag(\mu)
\end{smallmatrix}\Big)$ is distributed according to the orthogonal invariant probability measure on $\S^{2d-1}$, the latter being the standard normalized surface measure. 

Let $y:=\frac{z_i-z_j}{\|z_i-z_j\|}\in \mathbb{S}_\C^{d-1}$, so that $|\lambda_i(\mu)-\lambda_j(\mu)|/\|z_i-z_j\|=|\langle y,\mu\rangle|$. Since 
\begin{equation}\label{eq:Real Im}
\Big|
\left\langle  \big(\begin{smallmatrix} \Real(y)\\ \Imag(y)
\end{smallmatrix}\big), \big(\begin{smallmatrix} \Real(\mu)\\ \Imag(\mu)
\end{smallmatrix}\big)\right\rangle\Big| = |\Real\big(\langle y,\mu\rangle\big)| \leq |\langle y,\mu\rangle|,
\end{equation}
we obtain an upper bound by simply considering
\begin{equation}\label{eq:dfrt}
\Big|
\left\langle  \big(\begin{smallmatrix} \Real(y)\\ \Imag(y)
\end{smallmatrix}\big), \big(\begin{smallmatrix} \Real(\mu)\\ \Imag(\mu)
\end{smallmatrix}\big)\right\rangle\Big| \leq \epsilon.
\end{equation}
Due to the orthogonal invariance of the surface measure on $\S^{2d-1}$, the distribution of the left-hand-side in \eqref{eq:dfrt} does not  depend on the special choice of $y\in\mathbb{S}_\C^{d-1}$, so that we can simply assume that $\Big(\begin{smallmatrix}\Real(y)\\ \Imag(y)
\end{smallmatrix}\Big)$ is the north pole. The inequality \eqref{eq:dfrt} reduces to $-\epsilon\leq \Real(\mu_1)\leq \epsilon$, hence, describes the complement of two opposing spherical caps in $\S^{2d-1}$. This ``equatorial band'' has measure 
\begin{equation*}
1-\mathcal{I}_{[1-\epsilon^2]}(d-\frac{1}{2},\frac{1}{2}) = \mathcal{I}_{[\epsilon^2]}(\frac{1}{2},d-\frac{1}{2}),
\end{equation*}
see, for instance, \cite{Li:2011id}, where $\mathcal{I}_{[x]}(a,b)$ is the cumulative distribution function of the Beta distribution, i.e., 
\begin{equation*}
\mathcal{I}_{[x]}(a,b) = \frac{\int_0^x t^{a-1}(1-t)^{b-1}dt }{\Beta(a,b)},\qquad \Beta(a,b) = \frac{\Gamma(a)\Gamma(b)}{\Gamma(a+b)}.
\end{equation*}
%
%
%
%
For $d=1$, we observe 
\begin{equation*}
\mathcal{I}_{[\epsilon^2]}(1/2,1/2) = \frac{2\arcsin(\epsilon)}{\pi} \leq \frac{2}{\sqrt{\pi}}\epsilon.
\end{equation*}
Suppose now $d\geq 2$ and define 
\begin{equation*}
f(x):=2\sqrt{x} -  \mathcal{I}_{[x]}(1/2,d-1/2) \Beta(1/2,d-1/2).
\end{equation*}
A short calculation yields that its derivative satisfies
\begin{equation*}
f'(x)=\frac{1-(1-x)^{d-3/2}}{\sqrt{x}}\geq 0,\quad x\in[0,1].
\end{equation*}
Since $f(0)=0$, we obtain 
\begin{equation}
\mathcal{I}_{[\epsilon^2]}(\frac{1}{2},d-\frac{1}{2}) \leq \frac{2\epsilon}{\Beta(1/2,d-1/2)},\quad\epsilon\in[0,1].
\end{equation}
The observation $1/\Beta(1/2,d-1/2)\leq \sqrt{d/\pi}$ concludes the proof. 
\end{proof}
\begin{remark}
A short calculation leads to 
\begin{equation*}
\mathcal{I}_{[\epsilon^2]}(\frac{1}{2},d-\frac{1}{2})  =
 \frac{2}{\pi} \Big[  \arcsin(\epsilon) + \epsilon \sum_{k=2}^{d} \frac{4^{k-2}(k-2)!^2 }{(2k-3)(2k-4)!} (1-\epsilon^2)^{k-3/2} \Big].
 \end{equation*}
One then deduces directly that, for fixed $d$ and small $\epsilon$, the term $\mathcal{I}_{[\epsilon^2]}(\frac{1}{2},d-\frac{1}{2}) $ is of the order $\epsilon$.
\end{remark}

%
%


Theorem \ref{th:22}, Corollary \ref{th:single}, and Theorem \ref{th:stab oder so} enable us to determine $z_{\tau(1)},\ldots,z_{\tau(M)}$. The actual parameters $t_{\tau(j)}$ are computed as the principal values of $\log(z_{\tau(j)})$. The coefficients $c_{\tau(1)},\ldots,c_{\tau(M)}$ can be determined by solving the linear system $T=A^*DA$ for $D=\diag(c_{\tau(1)},\ldots,c_{\tau(M)})$ by the least squares method. We have summarized these steps in Algorithm \ref{alg_1}.
\begin{algorithm}
    \caption{Prony's method using the multivariate matrix pencil approach}\label{alg_1}
    \begin{algorithmic}[1]
\State \textbf{input} $f(k)$, $k\in I$.
\State Compute the reduced singular value decomposition of $T$.
\State Build the matrices $S_1,\ldots,S_d$.
\State Choose random $\mu\in\S_\C^{d-1}$ and compute a matrix $W$ that diagonalizes $C_\mu$.
\State Use $W$ to simultaneously diagonalize $S_1,\ldots,S_d$ and reconstruct $z_{\tau(1)},\ldots,z_{\tau(M)}$.
\State Compute $t_{\tau(j)}$ as the principal value of $\log(z_{\tau(j)})$, $j=1,\ldots,M$.
\State Solve $\mathrm{argmin}_{c}\,\,\|A^* c- f\|_2$ to recover $c_{\tau(1)},\ldots,c_{\tau(M)}$.
\State \textbf{return} $t_{\tau(1)},\ldots,t_{\tau(M)}$ and $c_{\tau(1)},\ldots,c_{\tau(M)}$.
    \end{algorithmic}
\end{algorithm}


\section{Application in superresolution microscopy}\label{sec:appl}
\subsection{Mathematical model}
In fluorescence microscopy one puts a fluorescence marker on proteins and stimulates them with a laser. 
In accordance with the fluorescent microscope's resolution limits, proteins are modeled as point sources, cf.~\cite{Studer:2012oq}, so that the probe is considered a tempered distribution 
\begin{equation}\label{eq:dira}
G = \sum_{j=1}^M c_j \delta_{t_j},
\end{equation}
on $\R^d$, where $\{t_j\}_{j=1}^M\subset [0,1)^d$ is associated to the protein locations and $\delta_{t_j}$ denotes the Dirac delta function with center $t_j$. Let $\mathcal{F}$ denote the Fourier transform on the space of tempered distributions on $\R^d$. Then $\mathcal{F}(G)$ is an exponential sum 
\begin{equation}\label{eq:ft sum}
\mathcal{F}(G)=\sum_{j=1}^M c_j \e^{-2\pi i\langle t_j,\cdot\rangle}.
\end{equation}
The actual measurements $g$ are the convolution of $G$ with some smooth and sufficiently fast decaying function $\varphi$, 
\begin{equation*}
g=G*\varphi = \sum_{j=1}^M c_j \varphi(\cdot-t_j).
\end{equation*}
Usually, $\varphi$ is modeled as a Gaussian with known parameters determined by the camera system. 

In order to determine the locations $\{t_j\}_{j=1}^M$ and the contributions $\{c_j\}_{j=1}^M$, suppose we have access to the Fourier transform of the measurements,
\begin{equation*}
\mathcal{F}(g) = \mathcal{F}(G) \mathcal{F}(\varphi).
\end{equation*}
Since $\varphi$ is known, let us also assume that we have access to $\mathcal{F}(\varphi)$. If $\varphi$ is a Gaussian, for instance, we know $\mathcal{F}(\varphi)$ analytically. We now look for some sampling set $I\subset \Z^d$, where $\mathcal{F}(\varphi)$ does not vanish, and are able to determine the right-hand-side of 
\begin{equation}\label{eq:form ert}
\mathcal{F}(G)(k)=\mathcal{F}(g)(k) / \mathcal{F}(\varphi)(k), \quad k\in I. 
\end{equation}
Combining \eqref{eq:ft sum} with \eqref{eq:form ert} leads to the sampling problem \eqref{eq:fund prob samp} discussed in the previous sections, i.e., 
\begin{equation}\label{eq:eq finale}
\sum_{j=1}^M c_j \e^{-2\pi i \langle t_j,k\rangle } = f(k),\qquad k\in I,
\end{equation}
with $f(k):=\mathcal{F}(g)(k) / \mathcal{F}(\varphi)(k)$. The parameters $\{t_j\}_{j=1}^M$ and $\{c_j\}_{j=1}^M$ can now be determined by Algorithm \ref{alg_1} in principle. Note that the above derivations in this section have also been used in \cite{PePoTa11} in combination with the univariate Prony's method.

In practice though, we are not able to numerically compute the Fourier transform of $g$ directly, so that the right-hand-side of \eqref{eq:eq finale} is not readily available. Aiming at the application of the discrete Fourier transform (DFT), we recognize that sufficient decay of $\varphi$ implies $g\in L^1(\R^d)$, so that its periodization 
\begin{equation*}
g_{\per} : = \sum_{l\in\Z^d} g(\cdot+l)
\end{equation*}
converges pointwise almost everywhere towards a function $g_{\per}\in L^1(\mathbb{T}^d)$, where $\mathbb{T}^d\simeq [0,1)^d$ is the $d$-dimensional torus. Let $\hat{g}_{\per}(k)$ denote the $k$-th Fourier coefficient of $g_{\per}$. The Poisson formula yields
\begin{equation*}
\mathcal{F}(g)(k) = \hat{g}_{\per}(k),\quad k\in I.
\end{equation*}
Thus, \eqref{eq:eq finale} can be evaluated by first computing the periodization $g_{\per}$, so that its Fourier coefficients yield
\begin{equation}\label{eq:rhs final}
\sum_{j=1}^M c_j \e^{-2\pi \mathrm i \langle t_j,k\rangle } = \hat{g}_{\per}(k) / \mathcal{F}(\varphi)(k) ,\qquad k\in I.
\end{equation}
Numerically, the DFT enables the approximation of the Fourier coefficients $\hat{g}_{\per}(k)$, $k\in I$, from samples of $g_{\per}$. 
%
%
%

It should be mentioned that all numerical experiments were realized in Python on an Intel~i7, 8GByte, 3GHz, macOS 10.12.

\subsection{Numerical results on synthetic data}
In our numerical experiments, we shall apply an implementation of the DFT to compute the discrete Fourier transform of samples of $g_{\per}$. The sampling rate of $g$ and hence $g_{\per}$ is determined by the pixel resolution. For both, synthetic and experimental fluorescence microscopy data, we choose $\varphi(\cdot) = \mathrm e^{-b \|\cdot\|^2}$ with adjusted parameter $b$ derived from the camera system. Therefore, the values $\mathcal{F}(\varphi)$ are even available in analytic form.  

Our analysis is first used on synthetic data in Figure \ref{fig:1} with 
	\begin{align*}
	t_1 &= \left(\tfrac{2}{5}, \tfrac{2}{5}\right), &c_1 &= 1,& b&=150,\\
	t_2 &= \left(\tfrac{2}{5}, \tfrac{3}{5}\right), & c_2 &= 1, \\
	t_3 &= \left(\tfrac{3}{5}, \tfrac{2}{5}\right), & c_3& = 1.
\end{align*} 
The measurements $g$ are first exact and in a second experiment corrupted by additive Gaussian noise with a signal to noise ratio of $\mathrm{SNR} = 2.554$, cf.~Figure \ref{fig:1}. For our computations we choose, if not stated otherwise,  $n=4$, so that $I=\{-4,\ldots,5\}^2$ and $T$ is an $N\times N$ Toeplitz matrix with $N = 25$. These matrix dimensions show that our methodology is numerically feasible. By examining significant drops in the singular values of $T$, we determine $M$ being $3$ for the synthetic data. The reconstructed locations $\tilde{t}_1,\tilde{t}_2,\tilde{t}_3$ satisfy $\|t_j- \tilde{t}_j\|\leq 1.88\cdot 10^{-3}$, for $i=1,2,3$, in the noisy regime, and coincide with the correct locations up to machine precision in the noise-free regime, see Figure \ref{fig:1}. It is important to note that our approach does not require the parameters $\{t_j\}_{j=1}^M$ to lie on the pixel grid. The pixel grid is only used to approximate $\hat{g}_{\per}(k)$, $k\in I$, by the DCT to determine the right-hand-side in \eqref{eq:rhs final}. 
\begin{figure}
\subfigure[Blue stars indicate the three identified locations within noiseless synthetic data.]{
  \includegraphics[width=.45\textwidth]{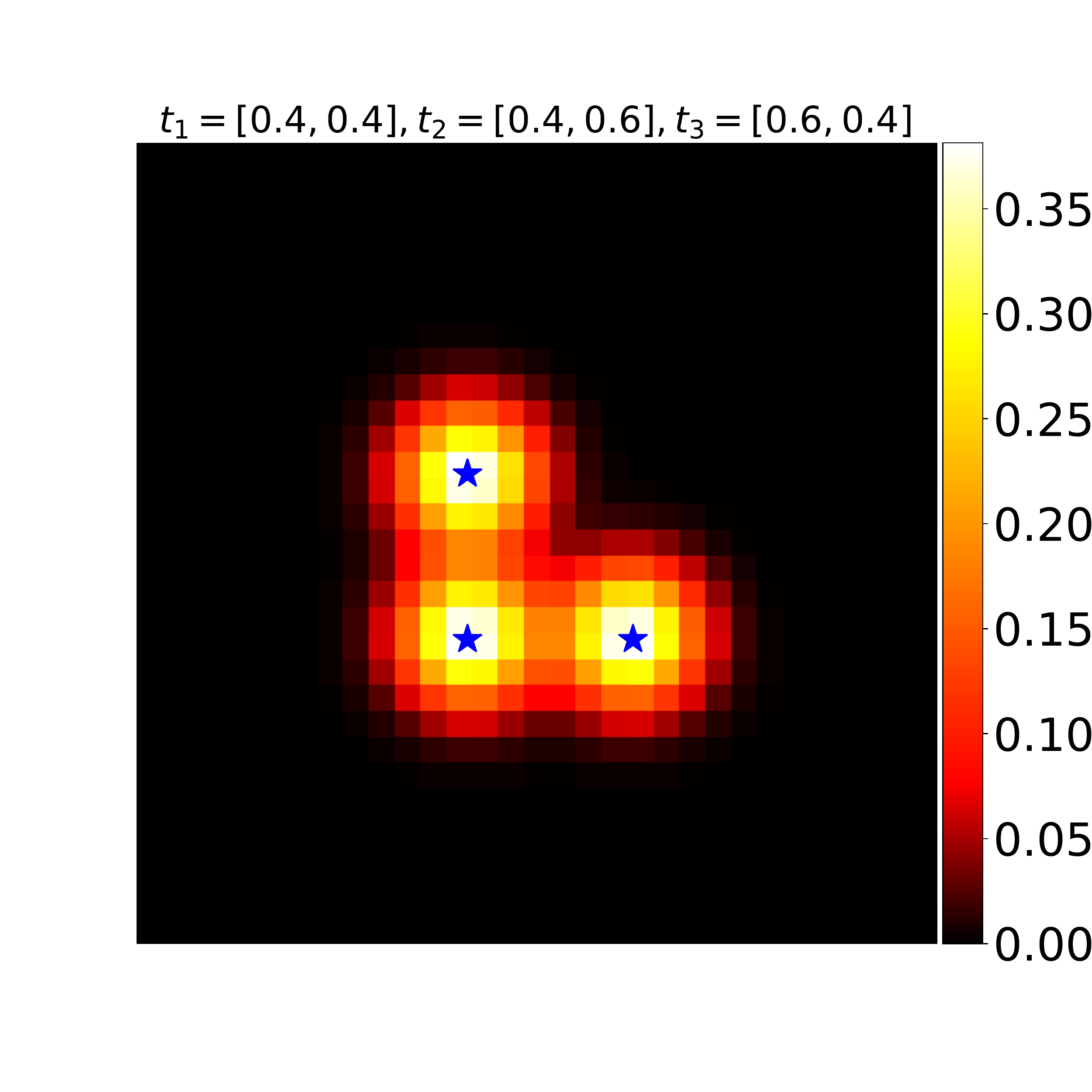}
  \label{fig:EchteDaten}}\hfill
\subfigure[Good location identification within synthetic data corrupted by additive Gaussian noise with $\mathrm{SNR}=2.554$.]{
  \includegraphics[width=.45\textwidth]{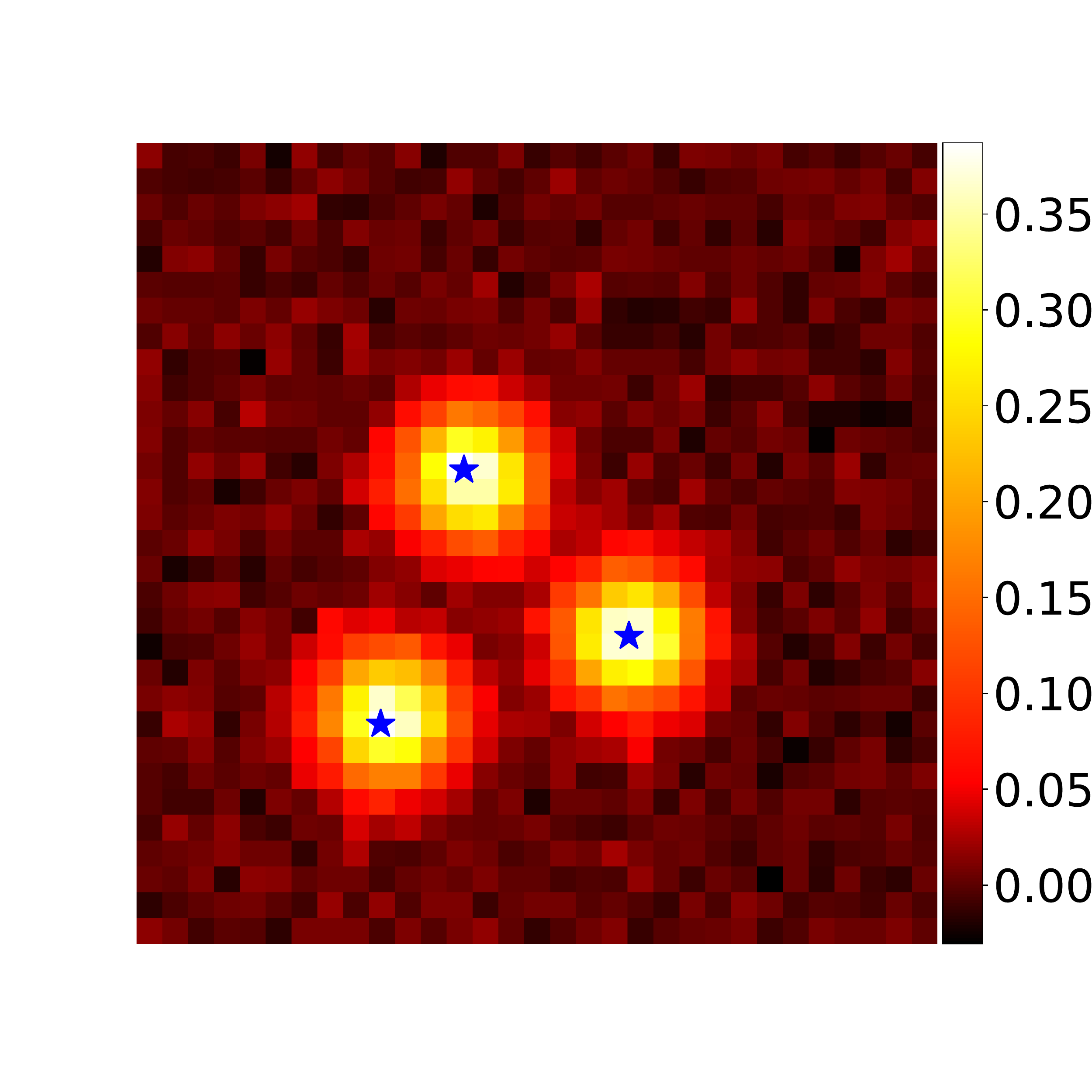}
  \label{fig:ModellAusDaten}
}
\caption{In noiseless synthetic data and in the presence of additive Gaussian noise in spatial domain, our proposed algorithm manages to find the locations $t_1, t_2, t_3$ with reasonable accuracy.}
\label{fig:1}
\end{figure}
Indeed, the locations that we compute do not lie on the pixel grid, so we are identifying locations on the subpixel level. This is an important advantage we gain by making our computations in the Fourier domain. Figure \ref{fig:SubpixelNeed} shows the difference between true locations $t_1 = 0.44, t_2 = 0.56$ of two one dimensional Gaussians, compared to the local maxima of their sum. For illustration purpose we use a one dimensional scenario in Figure \ref{fig:SubpixelNeed}. Even though this effect is negligible when $\|t_1 - t_2\|_2 \gg 0$, it would entail miscalculations when the positions $t_1, t_2$ of two proteins are close to each other. Consider a movie, where each frame is a picture as in Figure \ref{fig:ModellAusDaten} and the found locations $t_j$ are used to compute movement speeds of each protein. Then one would falsely compute an accelerated attraction and a longer contact phase of two approaching proteins if this effect is not considered.
\begin{figure}
\includegraphics[width=.75\textwidth]{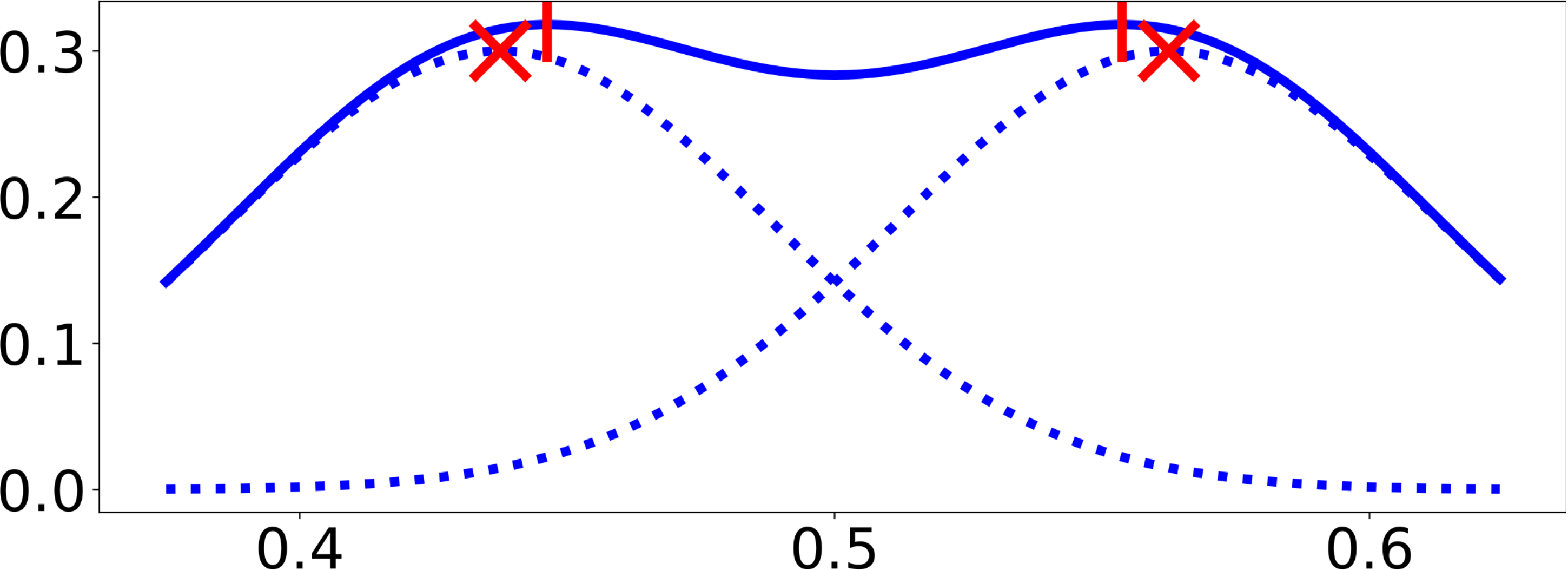}
\caption{The red crosses show the true location of $t_1= 0.44, t_2 = 0.56$ of two one-dimensional Gaussians, each depicted as a dotted line. The red bars however, show the local maxima of the sum of these gaussians and this sum is shown in a continuous line.}
\label{fig:SubpixelNeed}
\end{figure}

To illustrate potential numerical issues when the measurements are corrupted by noise, i.e., when $\tilde{g}:=g+\varepsilon$ is measured in place of $g$, we show in Figure \ref{fig:NoiseVsNoNoise} the real-parts of $\hat{g}_{\per}(k)$, $\hat{\tilde{g}}_{\per}(k)$, approximated by the DFT and $\mathcal{F}(\varphi)(k)=\hat{\varphi}_{\per}(k)$, as well as the respective ratios on a line $k_1=0$ and $k_2=-15,\ldots,15$. Even though we are dealing with images of the size $31\times 31$ pixels, the frequency data of the noisy ratio $\hat{\tilde{g}}_{\per}(k)/\hat{\varphi}_{\per}(k)$ seems only reliable close to the center. While $\hat{\varphi}_{\per}(k)$ decays with growing $k$, the noise keeps $\hat{\tilde{g}}_{\per}(k)$ from decaying, so that the ratio becomes unreasonably large. Therefore, we must restrict $n$ depending on the noise level, and $n=4$ seems to work in our synthetic data with fixed $\mathrm{SNR}$ as well as in our fluorescence microscopy data. Figure \ref{fig:WeitTeilen} shows the ratios $\hat{g}_{\per}(k)/\hat{\varphi}(k)$ for $k\in \{-4,\ldots,5\}^2$. 
\begin{figure}
\centering
\includegraphics[width=0.6\textwidth]{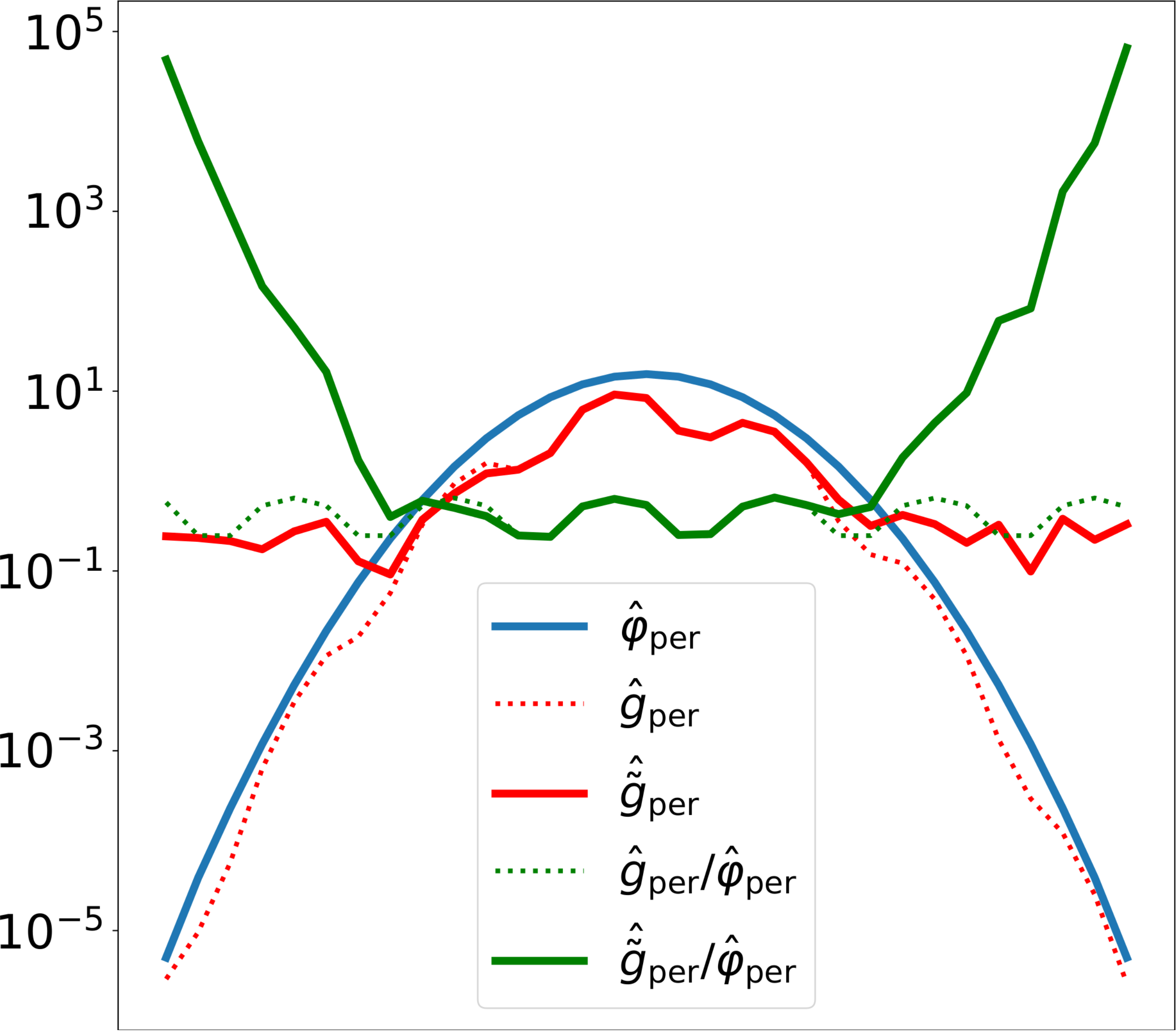}
\caption{The horizontal axis corresponds to $k_1=0$ and $k_2=-15,\ldots,15$. 
The decay of the Fourier coefficients $\hat{\tilde{g}}_{\per}$ stagnates in the presence of noise, so that the ratio $\hat{\tilde{g}}_{\per}(k)/\hat{\varphi}_{\per}(k)$ is unbounded away from the center.}
\label{fig:NoiseVsNoNoise}
\end{figure}
\begin{figure}
\subfigure[Real part]{
  \includegraphics[width=0.45\textwidth]{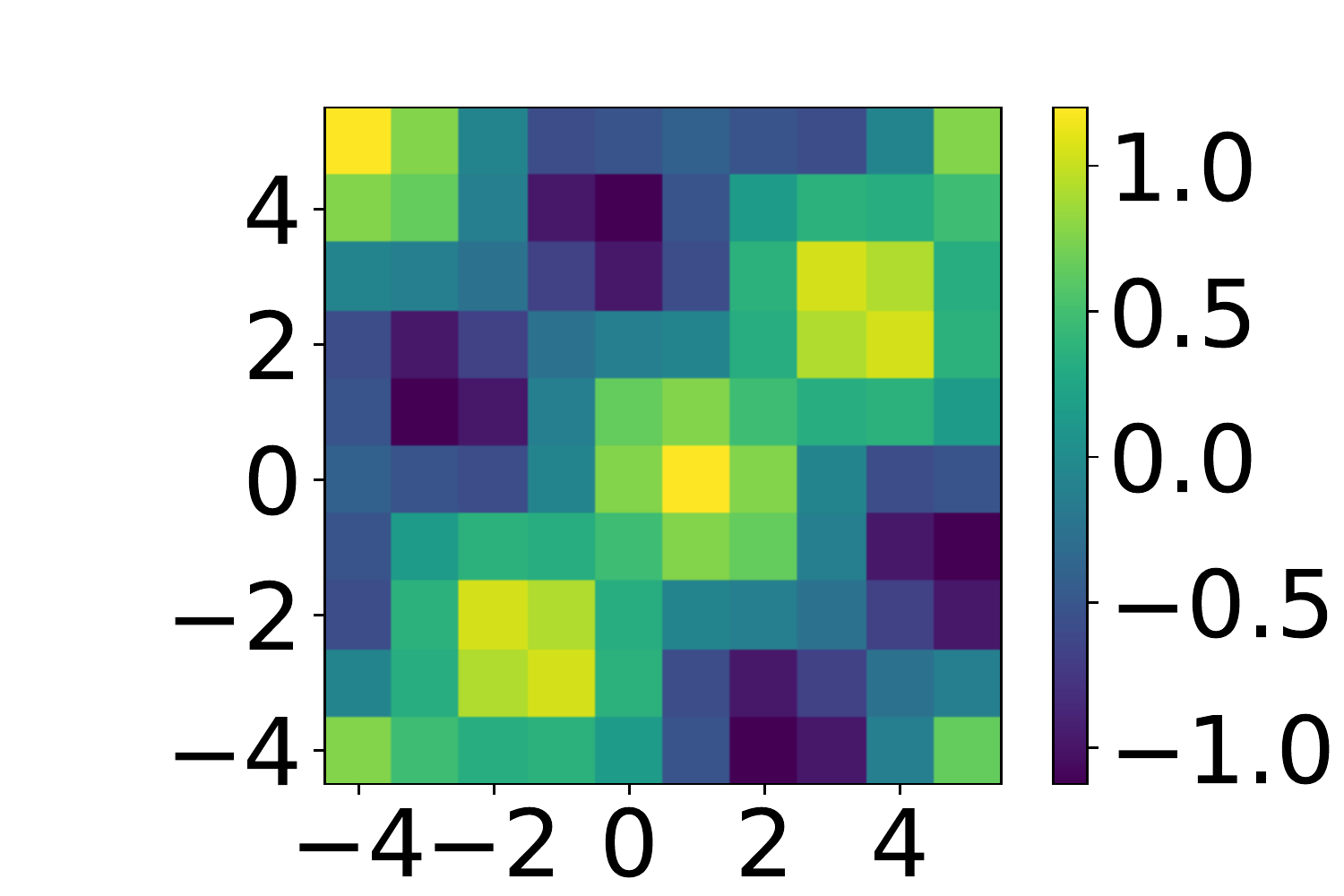}
}
\subfigure[Imaginary part]{
  \includegraphics[width=0.45\textwidth]{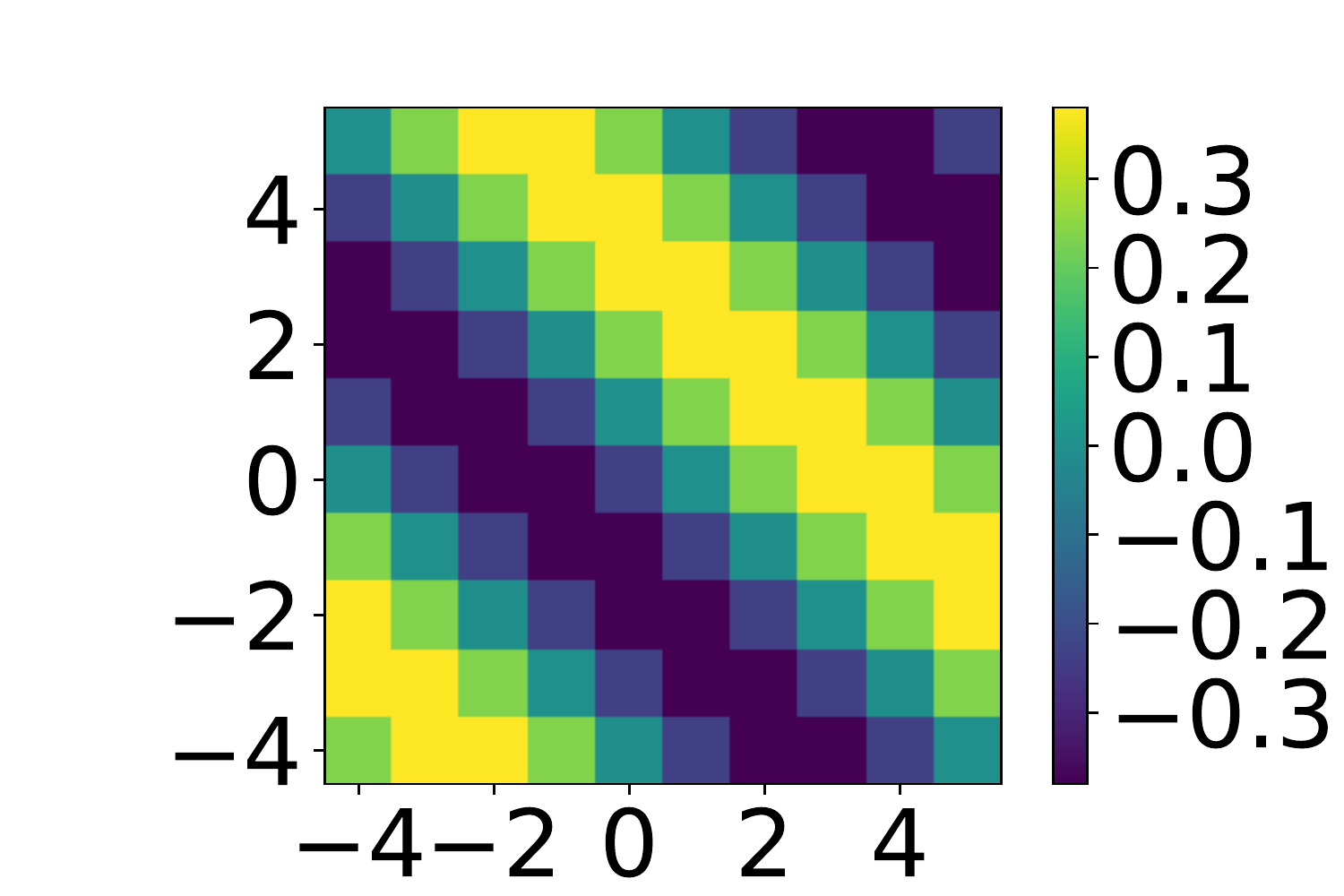}
}
\caption{$\hat{g}_{\per}(k)/\hat{\varphi}_{\per}(k)$ on $k\in \{-4,\ldots,5\}^2$.}
\label{fig:WeitTeilen}
\end{figure}

Theorem \ref{th:22} requires $n$ to be larger if the minimal separation distance 
\begin{equation*}
q:=\min_{j\neq i}\|z_j-z_i\|
\end{equation*}
becomes smaller. In Figure \ref{fig:EinflussVonQ} we illustrate this relation by two examples with noisy synthetic data, one for $q_1 = 0.283$ and the other for $q_2 = 0.057$. For $n=1$ and $n=4$, the locations can still be recovered reasonably well for $q_1$. In the case $q_2$, the choice $n=1$ fails to recover the locations that are close to each other but $n=4$ is successful. 
\begin{figure}
\subfigure[$\min_{i\neq j}\|z_i-z_j\|=0.283$: locations are recovered with error margins $\leq 7.1\cdot 10^{-3}$ and $\leq 2.8\cdot 10^{-3}$ for $n=1$ and $n=4$, respectively.]{
  \includegraphics[width=0.45\textwidth]{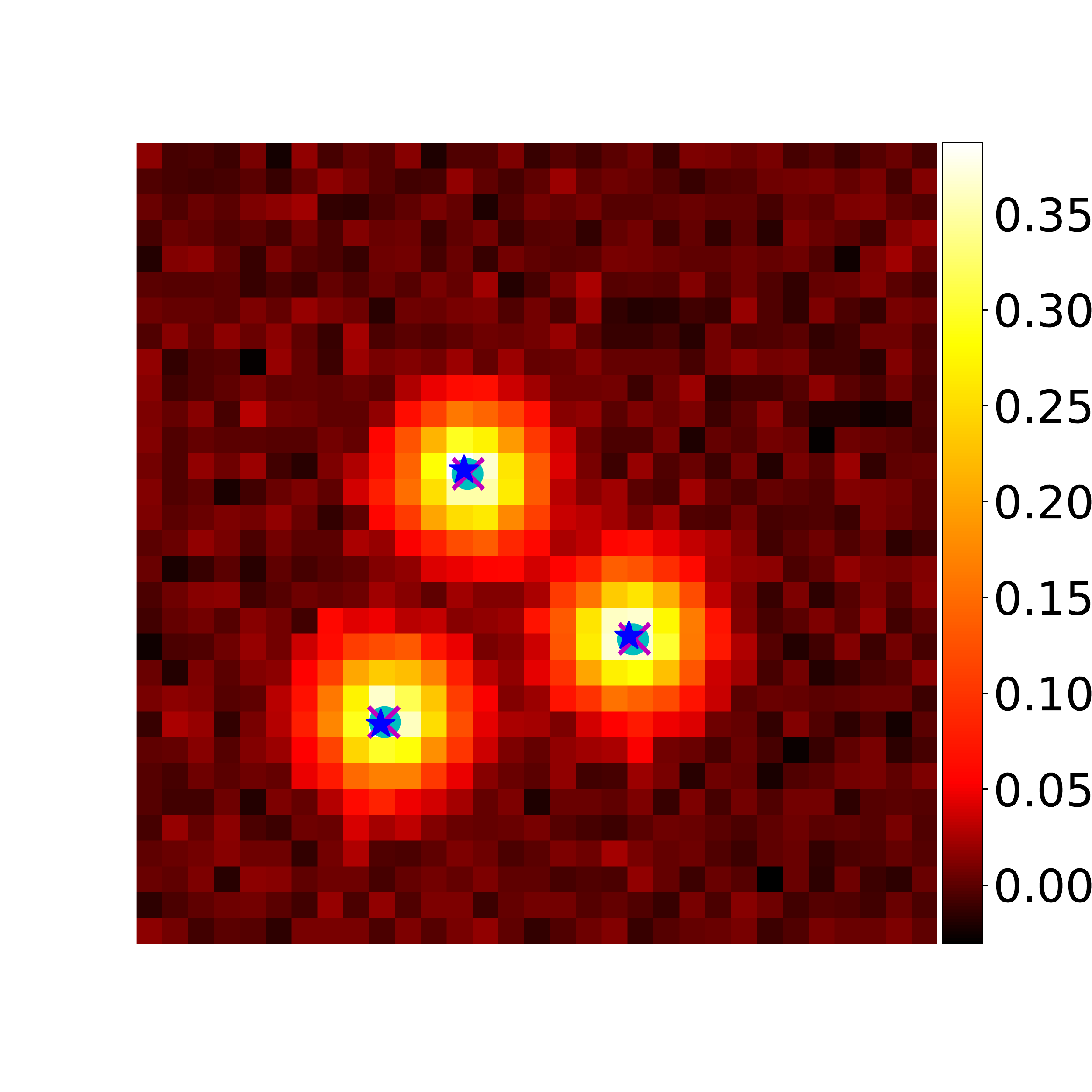}
  }
  \hfill
\subfigure[$\min_{i\neq j}\|z_i-z_j\|= 0.057$: $n=1$ fails. Locations are correctly recovered for $n=4$ with error $\leq 1\cdot 10^{-2}$.]{
  \includegraphics[width=0.45\textwidth]{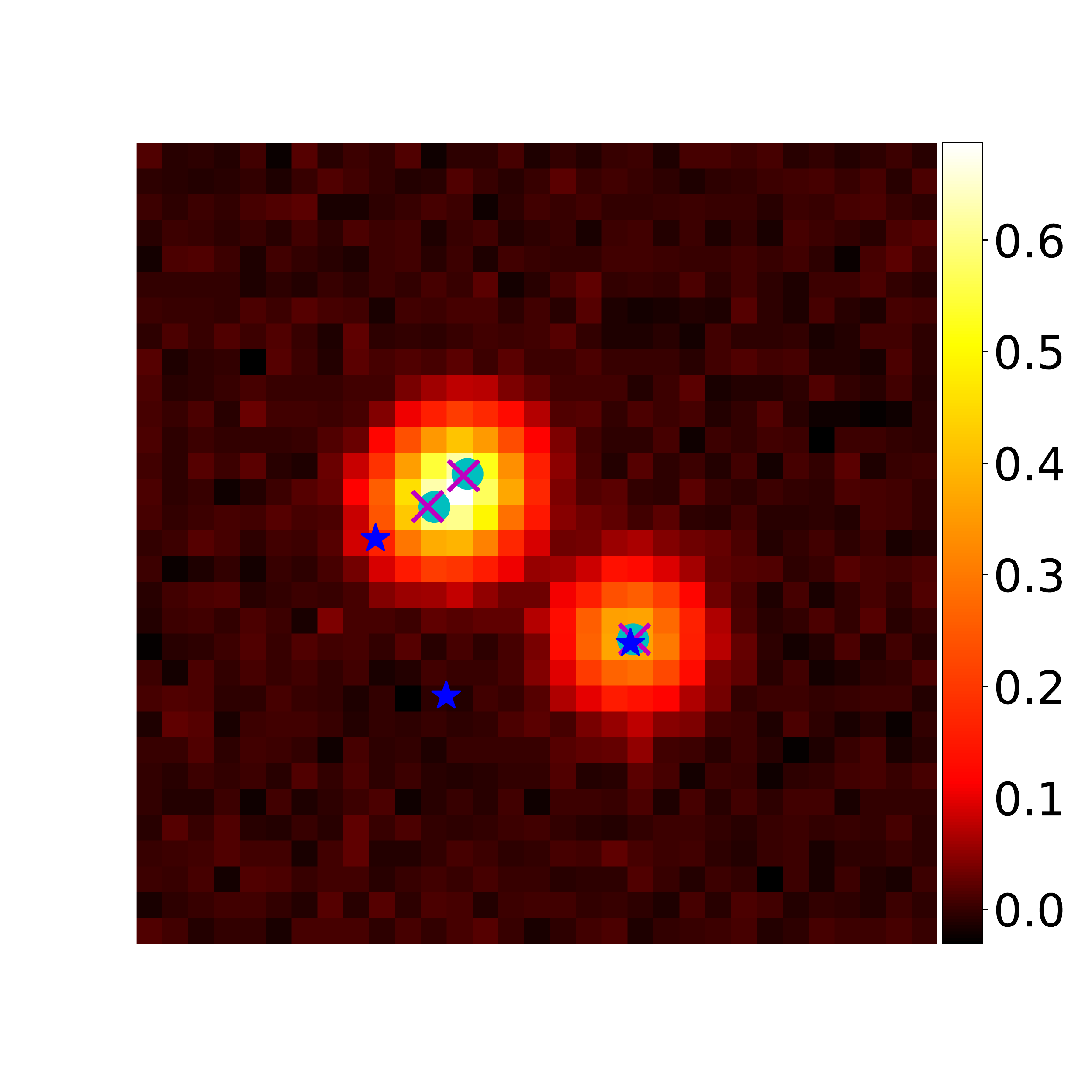}
}
\caption{Noisy synthetic data with $\mathrm{SNR}=2.554$. The light blue circles show the true locations $t_1, t_2, t_3$. The blue stars show the reconstruction with $n=1$, the magenta crosses show the reconstruction with $n=4$. In accordance with the ``spirit'' of the requirements on $n$ in Theorem \ref{th:22}, well-separated true locations allow for small $n$. If locations are not well-separated, then $n=1$ fails but the choice $n=4$ enables reconstruction.}
\label{fig:EinflussVonQ}
\end{figure}

\subsection{Numerical results on fluorescence microscopy data}
The cell-surface receptor IFNAR2 (type I interferon beta-subunit) of living cells was labelled with biofunctionalized quantum dots (QD605, Cat. No. Q21501MP, Invitrogen \cite{YoWiRiBeLiPi13}). These nanoparticles are small in size (hydrodynamic radius of 15-21 nm) but show an extraordinary high fluorescence signal. Single-molecule imaging was done on an inverted TIRF (total internal reflection fluorescence) microscope (Olympus IX71) with a scientific grade digital camera (Hamamatsu ORCA Flash 4.0). After optical magnification (150xTIRF objective UAPO; NA, 1.45; Olympus) and pixel-binning the final pixel size in the image plane was calculated to be 87 nm. To achieve a high signal-to-noise ratio the signal integration time was set to 32 ms.

The decay of the singular values of $T$ with $n=4$ for the experimental fluorescence microscopy data in Figure \ref{fig:real 1}(a) suggest $M=8$. This yields $C_\mu,S_1,S_2 \in\mathbb C^{8\times 8}$ and our algorithm finds the parameters $t_j, c_j$, $j=1,\ldots,8$, in less than a millisecond. Note in Figure \ref{fig:real 1}(b) that our algorithm, somewhat surprisingly, successfully identifies proteins at the boundary of the image, even though one would expect artifacts due to periodization issues. However, those identified translations close to the boundary are not very reliable and will need a post- or pre-processing step in a more elaborate analysis in practice. 

\begin{figure}
\subfigure[]{
  \includegraphics[width=0.45\textwidth]{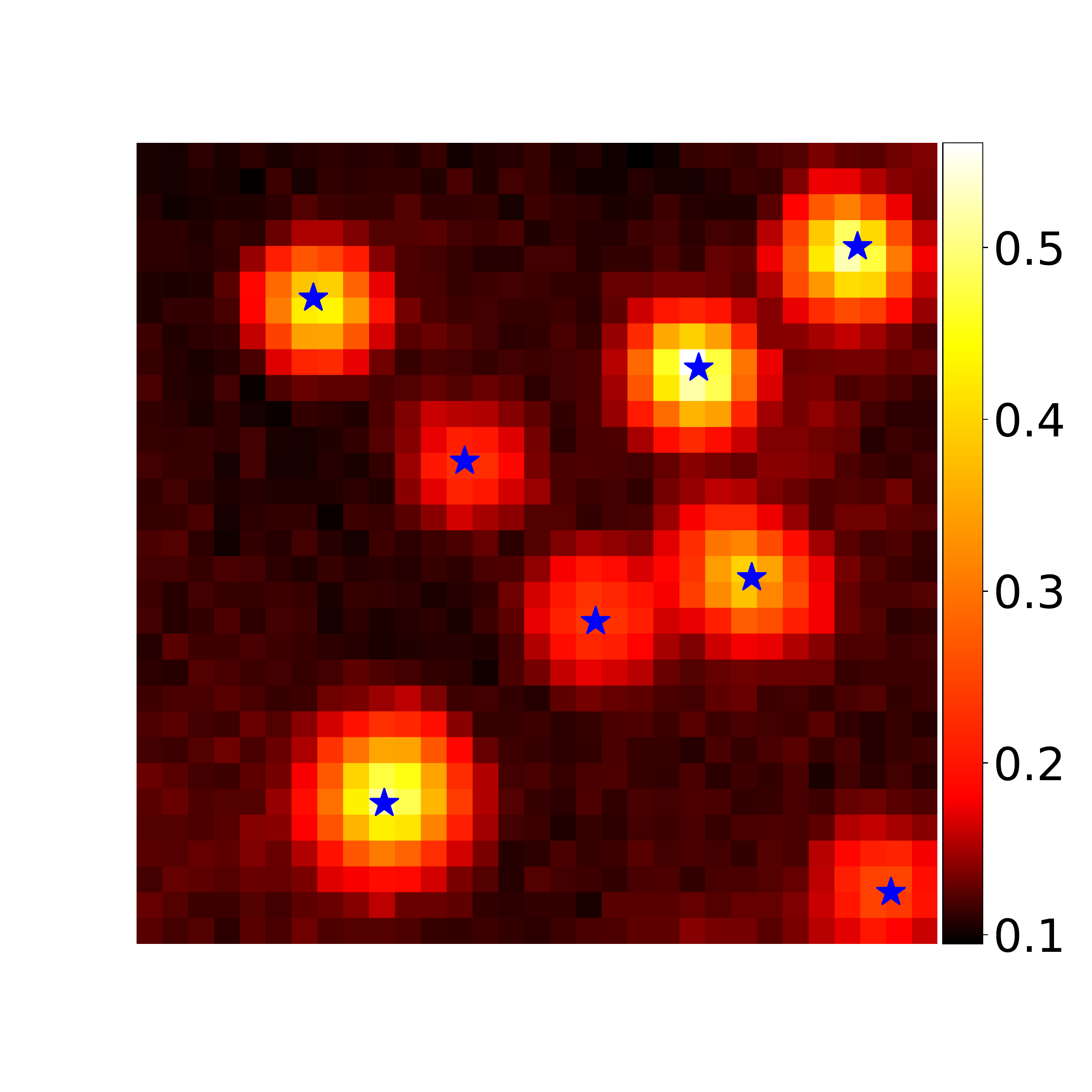}
  }
  \hfill
\subfigure[]{
  \includegraphics[width=0.45\textwidth]{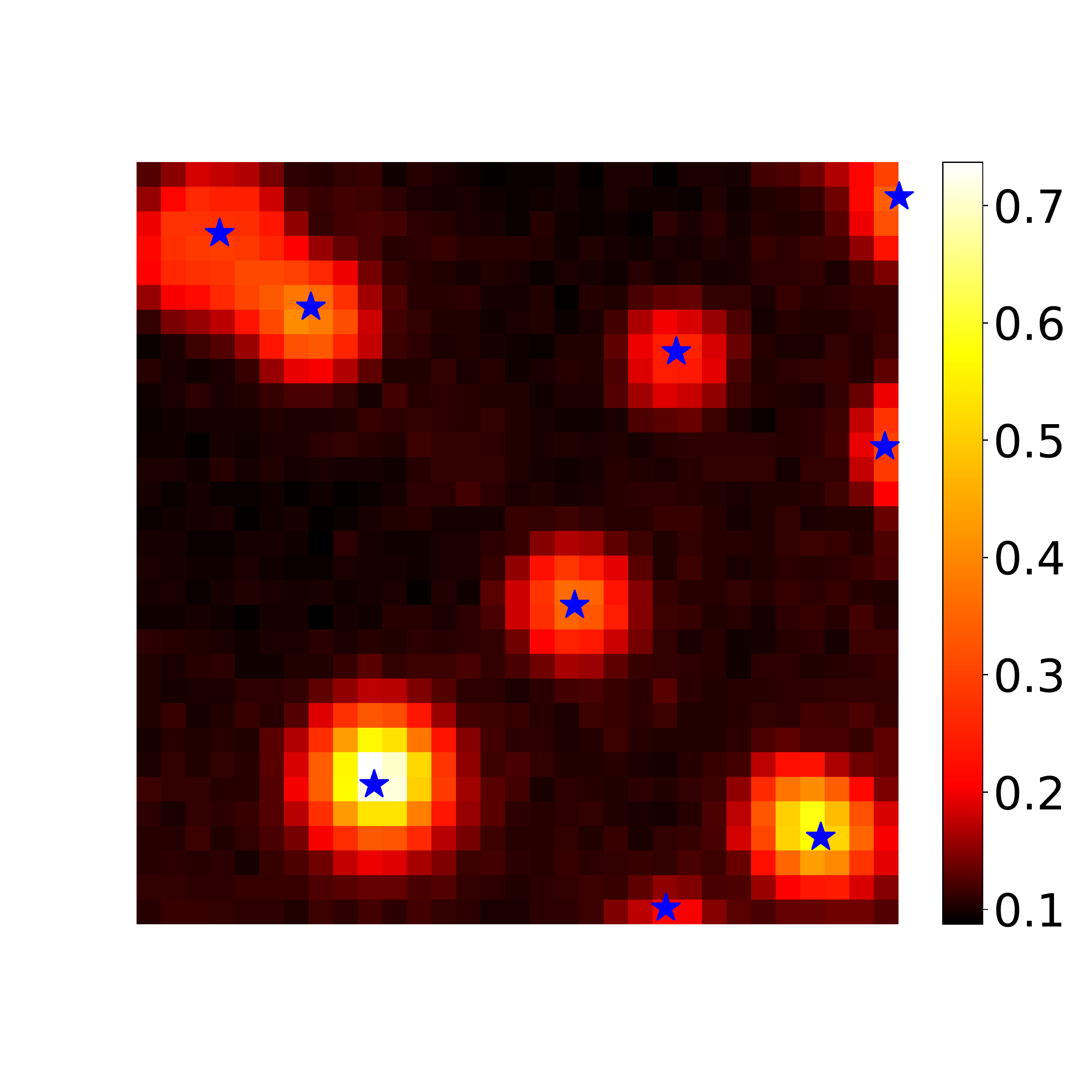}
}
\caption{Experimental data with blue stars marking identified locations.}
\label{fig:real 1}
\end{figure}

\section*{Conclusion}
We proposed an algorithm that finds multivariate frequencies out of structured samples of a finite sum of multivariate exponentials. Our proposed algorithm is a multivariate generalization of a matrix pencil method and is based on simultaneous diagonalization of a pencil of non-normal matrices. We also studied a method to simultaneously diagonalize the occurring non-normal matrices by analyzing random linear combinations. Randomness was also quantified in relation to the minimal separation of the exponential parameters. We successfully tested our algorithm on experimental data from fluorescence microscopy.

\section*{Acknowledgements}
The authors have been partially funded by WWTF through project VRG12-009, by DAAD through P.R.I.M.E.~57338904, by FWF project P30148, and by DFG-SFB944.

\providecommand{\bysame}{\leavevmode\hbox to3em{\hrulefill}\thinspace}
\providecommand{\MR}{\relax\ifhmode\unskip\space\fi MR }
\providecommand{\MRhref}[2]{%
  \href{http://www.ams.org/mathscinet-getitem?mr=#1}{#2}
}
\providecommand{\href}[2]{#2}

\end{document}